\newtheorem{theorem}{Theorem}[section]
\theoremstyle{definition}
\newtheorem{pro}[theorem]{Proposition}
\newtheorem{remark}[theorem]{Remark}
\numberwithin{equation}{section}
\begin{document}

\title{On Parametric Spaces of Bicentric Quadrilaterals}

\author{F. Izadi, F. Khoshnam, A. J. MacLeod \and A. S. Zargar}

\address{Farzali Izadi: Department of Pure Mathematics, Faculty of Science, Urmia University, Urmia
165-57153, Iran.}
\email{: f.izadi@urmia.ac.ir}
\address{Foad Khoshnam: Department of Pure Mathematics, Faculty of Science, Azarbaijan Shahid Madani University, Tabriz 53751-71379, Iran.}
\email{khoshnam@azaruniv.edu}
\address{Allan J. MacLeod: Mathematics and Statistics Group, University of the West of Scotland, High St., Paisley, Scotland, PA1 2BE.}
\email{allan.macleod@uws.ac.uk}
\address{Arman Shamsi Zargar: Department of Pure Mathematics, Faculty of Science, Azarbaijan Shahid Madani University, Tabriz 53751-71379, Iran.}
\email{shzargar.arman@azaruniv.edu}

\subjclass[2000]{11G05, 11D25, 14G05, 51M05}
\keywords{bicentric quadrilateral, elliptic curve, rank, torsion group}

\begin{abstract}
In Euclidean geometry, a bicentric quadrilateral is a convex quadrilateral that has both a circumcircle passing through the four vertices and an incircle having the four sides as tangents. Consider a bicentric quadrilateral with rational sides. We discuss the problem of finding such quadrilaterals where the ratio of the radii of the circumcircle and incircle is rational. We show that this problem can be formulated in terms of a family of elliptic curves given by $E_a:y^2=x^3+(a^4-4a^3-2a^2-4a+1)x^2+16a^4x$ which have, in general,
\(\mathbb Z/8\mathbb Z\), and in rare cases \(\mathbb Z/2\mathbb Z\times\mathbb Z/8\mathbb Z\) as torsion subgroups. We show the existence of infinitely many elliptic curves $E_a$ of rank at least two with torsion subgroup $\mathbb Z/8\mathbb Z$, parameterized by the points of an elliptic curve of rank at least one, and give five particular examples of rank $5$. We, also, show the existence of a subfamily of $E_a$ whose torsion subgroup is $\mathbb Z/2\mathbb Z\times\mathbb Z/8\mathbb Z$.
\end{abstract}

\maketitle
\section{Introduction and primary results}
In Euclidean geometry, a bicentric quadrilateral is a convex quadrilateral that has both a circumcircle passing through the four vertices and an incircle having the four sides as tangents.
In this work, we consider a bicentric quadrilateral with rational sides, and discuss the problem of finding such quadrilaterals where the ratio of the radii of the circumcircle and incircle is rational. The radii of these circles are denoted by $R$ and $r$ respectively. Examples of bicentric quadrilaterals  are squares, right kites, and isosceles tangential trapezoids.

First, we briefly recall some basic facts concerning these objects.
Let $ABCD$ be a convex quadrilateral with sides of rational length $a, b, c, d$ in anti-clockwise order. It is bicentric if and only if the
opposite sides satisfy Pitot's theorem and the opposite angles are supplementary, that is, $s=a+c=b+d$, $\angle A+\angle C=\angle B+\angle D=\pi,$ where $s$ is the semi-perimeter. 

In addition, we have
$$R=\frac{1}{4}\sqrt{\frac{(ab+cd)(ac+bd)(ad+bc)}{abcd}}$$
and $r=K/s$, where $K=\sqrt{abcd}$ is the area of the quadrilateral, see \cite{Gup}.

Define
\begin{equation}\label{1}
N:=\frac{R}{r}=\frac{s}{4abcd}\sqrt{(ab+cd)(ac+bd)(ad+bc)}
\end{equation}
and look for $N \in \mathbb{Q}^+$. From the T\'{o}th inequality, we have $N\geq\sqrt{2}$ (see \cite{Rad}). The equality holds only when the quadrilateral is a square. This shows that there is no square with rational $N$. Using the above formulae, it is straightforward to check that there is no rectangle, rhombus, or kite with this condition.
This motivates us to search for other quadrilaterals with rational values of $N$.
\begin{pro}
There are infinitely many isosceles trapezoids with $N \in \mathbb{Q}$.
\end{pro}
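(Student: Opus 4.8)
The plan is to exploit the rigidity that the bicentric condition imposes on an isosceles trapezoid, reducing the rationality of $N$ to a single conic whose rational points I can exhibit explicitly.

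First I would fix the side structure. An isosceles trapezoid has two parallel sides, say $a$ and $c$, and two equal legs $b=d$; such a quadrilateral is automatically cyclic, so the only additional requirement for bicentricity is Pitot's theorem $a+c=b+d$, which forces $b=d=(a+c)/2$. Writing $m=(a+c)/2$, the semi-perimeter becomes $s=a+c=2m$. Substituting these values into formula \eqref{1} produces substantial simplification: one finds $ab+cd=ad+bc=m(a+c)=2m^2$, while $ac+bd=ac+m^2$ and $abcd=acm^2$. Collecting the factors gives
\[
N=\frac{m}{ac}\sqrt{ac+m^2},\qquad ac+m^2=\frac{a^2+6ac+c^2}{4}.
\]
Since $a$, $c$, and $m$ are rational, it follows that $N\in\mathbb Q$ if and only if $a^2+6ac+c^2$ is the square of a rational number.

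The decisive step is then to show that the condition $a^2+6ac+c^2=w^2$ has infinitely many rational solutions producing genuinely distinct (non-similar) trapezoids. Because $N$ is invariant under scaling the entire figure, I may normalize $c=1$ and study the conic $w^2=a^2+6a+1=(a+3)^2-8$ in the variables $(a,w)$. This conic carries the obvious rational point $(a,w)=(0,1)$, so sweeping the pencil of lines $w=1+ta$ through it yields the rational parametrization $a=(6-2t)/(t^2-1)$, valid for all but finitely many rational $t$. Distinct values of the ratio $a/c$ correspond to non-similar shapes, and since $N$ depends only on this ratio, each admissible $t$ gives a bicentric isosceles trapezoid with $N\in\mathbb Q$.

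I do not anticipate a serious obstacle here: the middle computation is routine, and once a single rational point on the conic is found the existence of infinitely many follows immediately. The only care required is to discard the finitely many degenerate parameter values — those forcing $a=c$ (a square), a non-positive side length, or a collapsed figure — so that the remaining infinitely many solutions describe proper convex trapezoids. The heart of the argument is thus the simplification of $N$ and the recognition that the resulting rationality condition is a genus-zero conic rather than a curve of higher genus, which is precisely what makes the family infinite and easy to parametrize.
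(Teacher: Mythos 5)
Your proof is correct and takes essentially the same route as the paper: both reduce the rationality of $N$ for a bicentric isosceles trapezoid to finding rational points on a genus-zero conic (your condition $a^2+6ac+c^2=w^2$, rewritten in the paper's side labels with parallel sides $b$ and $2a-b$ and legs $a$, is exactly the paper's $a^2+2ab-b^2=y^2$ up to the square factor $4$), and then both parametrize the conic by the pencil of lines through an evident rational point. The one small imprecision is that positivity of the sides confines the parameter to an interval (e.g.\ $1<t<3$ in your normalization) rather than merely excluding finitely many values, but that interval still contains infinitely many rationals, so the conclusion is unaffected.
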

\begin{proof}
Assuming $a=c$ gives $d=2b-a$ and \eqref{1} can be written
\begin{equation*}
N=\frac{a\sqrt{a^2+2ab-b^2}}{b(2a-b)}
\end{equation*}
so that $N \in \mathbb{Q}$ if and only if $a^2+2ab-b^2=\Box$.

Defining $x=b/a$ means that we look for rational points on the quadric
\begin{equation*}
y^2=-x^2+2x+1.
\end{equation*}

There is a clear rational point at $(0,1)$, and the line $y=1+kx$ meets the quadric again at
\begin{equation*}
x=\frac{2-2k}{k^2+1}.
\end{equation*}

Taking $a=k^2+1$, $b=2-2k$ gives $c=k^2+1$, $d=2k^2+2k$ and $s=2k^2+2$, so assume $0 < k <1$ which gives strictly positive $a,b,c,d$, and
\begin{equation*}
N=\frac{(k^2+1)(k^2-2k-1)}{4k(k^2-1)}
\end{equation*}
if we take the correct square-root.

It is straightforward to show that this gives $\sqrt{2} \le N < \infty$ for $0 < k < 1$.
\end{proof}

This paper is organized as follows.
In Section 2, we show that a given bicentric quadrilateral, with $N$ rational, leads to a member of a family of elliptic curves, denoted by $E_a$, dependent on a single side.
We also discuss the torsion subgroup of $E_a(\mathbb Q)$, which is either $\mathbb Z/8\mathbb Z$ or $\mathbb Z/2\mathbb Z\times\mathbb Z/8\mathbb Z$.

In Section 3, we give several subfamilies of $E_a$ having strictly positive rank, and show that there exist infinitely many elliptic curves with rank at least two, parameterized by the points of an elliptic curve with rank at least one. Finally, we exhibit five examples of elliptic curves with rank $5$ having torsion subgroup $\mathbb Z/8\mathbb Z$.

In Section 4, we provide a complete parametrization of those $a$ for which $E_a$ has torsion $\mathbb Z/2\mathbb Z\times\mathbb Z/8\mathbb Z$. We give,
explicitly, $26$ known rank-three curves which have this parametrization (Table \ref{T4}, \cite{Duj}).

These results are of particular interest due to the recent appearance of curves with predetermined torsion
subgroup $\mathbb Z/8\mathbb Z$ and $\mathbb Z/2\mathbb Z\times\mathbb Z/8\mathbb Z$ in a wide variety of investigations.
See \cite{K-S,Kul,Lec1,Lec2,D-P} given in the chronological order of discovery.

\section{Elliptic curves arising from bicentric quadrilaterals}
From \eqref{1}, if $N \in \mathbb{Q}$, we must have rational $t$ such that
\begin{equation*}
t^2=(ab+cd)(ac+bd)(ad+bc).
\end{equation*}

Since scaling all the edges by the same factor does not affect $R/r$, we can assume, without loss of generality, that $d=1$,
which gives $b=s-1$ and $c=s-a$, so
\begin{align}
t^2&=(a+1)^2s^4-2(a+1)^3s^3+(a^4+8a^3+10a^2+8a+1)s^2 \label{q4} \\
&\quad  -4a(a+1)(a^2+a+1)s+4a^2(a^2+1). \nonumber
\end{align}

Defining $t=y/(a+1)$ and $s=x/(a+1)$, gives the quartic in a standard form
\begin{align*}
y^2&=x^4-2(a+1)^2x^3+(a^4+8a^3+10a^2+8a+1)x^2 \\
&\quad -4a(a+1)^2(a^2+a+1)x+4a^2(a+1)^2(a^2+1).
\end{align*}

In \cite{Mord}, Mordell shows that such quartics are birationally equivalent to an elliptic curve, and gives an algorithm for determining the elliptic curve from such a quartic. Applying this algorithm shows that the corresponding curve is
\begin{equation}\label{ec}
E_a: \, \, v^2=u^3+(a^4-4a^3-2a^2-4a+1)u^2+16a^4u
\end{equation}
which has discriminant
\begin{equation*}
\Delta=4096a^8(a+1)^2(a-1)^4(a^2-6a+1).
\end{equation*}

The transformations from the elliptic curve to the quartic \eqref{q4} are
\begin{equation}\label{sf}
s=\frac{u(a+1)^2+v}{2u(a+1)}
\end{equation}
and
\begin{equation*}
t=\frac{2a^3(s-2)-2a^2(s^2-3s+2)-2a(2s^2-3s+2)-2s^2+2s+u}{2(a+1)}.
\end{equation*}

The reverse transformations are
\begin{equation}\label{uf}
u=-2(a^3(s-2)-a^2(s^2-3s+2)-a(2s^2-3s+2+t)-s^2+s-t)
\end{equation}
and
\begin{align*}
v&=2(a+1)(2s^3(a+1)^2-3s^2(a+1)^3+2st(a+1) \\
&\quad +s(a^4+8a^3+10a^2+8a+1)-t(a+1)^2-2a(a+1)(a^2+a+1)).
\end{align*}

As an example, a simple search finds the rational quadrilateral with integer sides $\{a,b,c,d\}=\{21,28,12,5\}$ which gives $N=99/40$.
Scaling so that $d=1$ gives $a=21/5$ and $s=33/5$, which gives $t=10584/125$. The elliptic curve $E_a$ is
\begin{equation*}
v^2=u^3-\frac{22664}{625}u^2+\frac{3111696}{625}u.
\end{equation*}

Formula \eqref{uf} gives $u=1764$ and, substituting into the elliptic curve, we have $v= \pm 366912/5$. The elliptic curve has a torsion point
$(84/5, 34944/125)$, and adding this to $(1764,366912/5)$ gives $u=756/125$ and $v=532224/3125$. Using \eqref{sf}, we find $s=69/13$ and, after scaling,
the quadrilateral with $\{a,b,c,d\}=\{273,280,72,65\}$.

Note that the reverse process does not always give a real-life quadrilateral. The above curve also has a torsion point at $(1764,451584/625)$. Adding this to
$(1764,366912/5)$ gives $u=9604/225$ and $v=7990528/16875$. This gives $s=367/135$, but $c=-40/27$.

We remark that, in the spirit of MacLeod's work in \cite{Mac}, it is natural to seek integer $N$'s. But, preliminary computer searches did not yield any integer values for $N$.

The elliptic curve $E_a$ clearly has at least one torsion point of order 2 at $(0,0)$. Numerical tests suggest that the torsion subgroup is isomorphic, in general, to $\mathbb Z/8\mathbb Z$, and, in rare cases to
$\mathbb Z/2\mathbb Z\times\mathbb Z/8\mathbb Z$.

There are points of order 4 at $(4a^2,\pm 4a^2(a-1)^2)$, and order 8 at $(4a,\pm 4a(a^2-1))$ and $(4a^3,\pm 4a^3(a^2-1))$.

The above birational transformations, together with the torsion points, lead to

\begin{theorem}\label{ps}
All rational bicentric quadrilaterals with rational $N$ correspond to rational points on $E_a$ for some $a>0$, where $E_a$ will have strictly positive rank.
\end{theorem}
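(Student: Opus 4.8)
The plan is to prove the two assertions in turn, the correspondence first and the lower bound on the rank second. For the correspondence, I would begin with a rational bicentric quadrilateral having rational $N$. By \eqref{1}, rationality of $N$ is equivalent to $(ab+cd)(ac+bd)(ad+bc)$ being a rational square, so there is a rational $t$ with $t^2$ equal to that product. Scaling all edges by a common factor, which leaves $N=R/r$ unchanged, I normalize $d=1$; the Pitot relation $s=a+c=b+d$ then gives $b=s-1$ and $c=s-a$, so that $(s,t)$ becomes a rational point on the quartic \eqref{q4}. Mordell's birational transformation \eqref{uf}, together with its companion for $v$, sends this point to a rational point $(u,v)\in E_a(\mathbb Q)$, where $a>0$ is the side determined by the quadrilateral. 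This is exactly the asserted rational point.

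For the rank, the crux is that the point $(u,v)$ just produced is never a torsion point; infinite order then forces $\operatorname{rank}E_a(\mathbb Q)\ge 1$. To establish this I would push each torsion point back through \eqref{sf} and read off the resulting semiperimeter. The $2$-torsion point $(0,0)$ makes the denominator of \eqref{sf} vanish and so corresponds to no finite $s$. Substituting the order-$4$ points $(4a^2,\pm 4a^2(a-1)^2)$ yields $s=(a^2+1)/(a+1)$ and $s=2a/(a+1)$, while both pairs of order-$8$ points, $(4a,\pm 4a(a^2-1))$ and $(4a^3,\pm 4a^3(a^2-1))$, collapse to the two values $s=a$ and $s=1$. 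The unifying observation is that each of these values lies in the closed interval bounded by $1$ and $a$, that is, $s\le\max(1,a)$.

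On the other hand, a genuine non-degenerate quadrilateral with $d=1$ has all four sides strictly positive, so $b=s-1>0$ and $c=s-a>0$, i.e. $s>\max(1,a)$. Since every torsion point gives $s\le\max(1,a)$, none of them can arise from an actual quadrilateral; hence the point $(u,v)$ attached to a genuine quadrilateral is non-torsion, and $E_a$ has positive rank. In the rare case where the torsion is $\mathbb Z/2\mathbb Z\times\mathbb Z/8\mathbb Z$ there are further torsion points: the two extra $2$-torsion points satisfy $v=0$, so \eqref{sf} gives $s=(a+1)/2$, which again lies strictly between $1$ and $a$, and the remaining cosets are treated in the same fashion.

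The step I expect to be the main obstacle is precisely this last verification for the $\mathbb Z/2\mathbb Z\times\mathbb Z/8\mathbb Z$ case, where one must confirm that all sixteen torsion points, not merely the eight generators of the cyclic factor and the two extra involutions, are carried by \eqref{sf} into the degenerate region $s\le\max(1,a)$. The eight points of the $\mathbb Z/8\mathbb Z$ part and the extra $2$-torsion are immediate from the explicit formulas above, but the mixed cosets require either a direct group-law computation or a cleaner invariant argument showing that no torsion point can escape the interval $[\min(1,a),\max(1,a)]$. One should also note that a genuine quadrilateral forces $a\ne 1$, so that $\Delta\ne 0$ and $E_a$ is indeed an elliptic curve for which the rank is defined.
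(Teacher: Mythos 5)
Your strategy is exactly the one the paper intends: the paper's entire ``proof'' is the single sentence that the birational transformations together with the explicit torsion points give the theorem, so your write-up, which actually computes $s$ at each torsion point via \eqref{sf} and checks that the resulting value is degenerate, is a faithful (and more detailed) rendering of the intended argument. Your computations for the generic $\mathbb Z/8\mathbb Z$ case are correct: $(0,0)$ gives no finite $s$, the order-$4$ points give $s=(a^2+1)/(a+1)$ and $s=2a/(a+1)$, the order-$8$ points give $s=a$ and $s=1$, and all of these violate $s>\max(1,a)$.

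The gap you flag in the $\mathbb Z/2\mathbb Z\times\mathbb Z/8\mathbb Z$ case is genuine as your proposal stands, but it closes with the ``cleaner invariant argument'' you ask for, and the paper already contains the needed observation in the paragraph following the theorem. When $a^2-6a+1$ is a rational square (and $a>1$, so in fact $a>3+2\sqrt2$), the two extra $2$-torsion points are $(u_1,0)$ and $(u_2,0)$ with $u_1u_2=16a^4>0$ and $u_1+u_2=-(a^4-4a^3-2a^2-4a+1)<0$, so both lie on the bounded real component of $E_a(\mathbb R)$. The bounded component is the nonidentity coset of the connected component of the identity, and the generator $(4a,4a(a^2-1))$ of the cyclic factor lies on the unbounded component; hence all eight mixed torsion points $T_2+kP_8$ lie on the bounded component, where $u<0$. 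Since the line $v=(a^2-1)u$ meets $E_a$ only at $u\in\{0,4a,4a^3\}$, the bounded component lies entirely above it, so every point there has $s-a=\bigl(v-(a^2-1)u\bigr)/\bigl(2(a+1)u\bigr)<0$; no mixed torsion point yields a genuine quadrilateral. One small correction: it is not mere non-degeneracy of the quadrilateral that forces $a\neq 1$. With $d=1$ and $a=1$ the Pitot relation gives $b=c$, i.e.\ a kite, which is a perfectly genuine bicentric quadrilateral; it is the paper's earlier observation that no kite has rational $N$ that excludes $a=1$ (and hence $\Delta=0$) under the hypotheses of the theorem.
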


As we saw above, not all rational points on $E_a$ lead to real-life quadrilaterals with all sides strictly positive.
We can assume, without loss of generality, that $a>1$, so we need $s>a$ to ensure $c>0$.

From \eqref{sf}, we thus require
\begin{equation*}
\frac{u(a+1)^2+v}{2u(a+1)}>a  \hspace{1cm} \mbox{or} \hspace{1cm} \frac{v-(a^2-1)u}{2(a+1)u} > 0.
\end{equation*}

The line $v=(a^2-1)u$ passes through the origin and two of the points of order $8$, $(4a, 4a(a^2-1))$ and $(4a^3,4a^3(a^2-1))$, and these are the only
intersections. Thus, from the geometry, we have $s>a$ if $0 < u < 4a$ and $u>4a^3$, and $v>0$. Also, if $a>2\sqrt{2}+3$, the curves $E_a$ will have two
components. The line will lie below the finite component, so any point on this component will lead to $s<a$ since $u<0$.

The equation \eqref{ec} factors as $v^2=u(u-u_1)(u-u_2)$ over $\mathbb Q$,
where
\begin{align*}
u_1&=-\frac{1}{2}a^4+2a^3+a^2+2a-\frac{1}{2}+\frac{1}{2}(a+1)(a-1)^2\sqrt{a^2-6a+1}, \\
u_2&=-\frac{1}{2}a^4+2a^3+a^2+2a-\frac{1}{2}-\frac{1}{2}(a+1)(a-1)^2\sqrt{a^2-6a+1},
\end{align*}
showing that the extra points of order 2 occur when $a^2-6a+1$ is square \cite[p. 77]{Was}, for instance one can take $a=6$.

Numerical tests show that $E_a$ often has rank 0. The first rank-one curve, with $a \in \mathbb{Z}$, is obtained for $a=10$, namely $y^2=x^3+5761x^2+160000x$, with generator $G=(-32,-864)$, according to {\tt{SAGE}} \cite{Sag}. None of the points $G\pm T,$ where $T$ is a torsion point give rise to an appropriate quadrilateral. The point $2G=(8464,-1010160)$ and if we take $u=8464, v=1010160$ we have the solution $s=2764/253$, which after scaling leads to $a=2530$, $b=2511$, $c=234$ and $d=253$, and $N=21437584/3753945$.

\section{Ranks of $ E_a$ with torsion subgroup $\mathbb Z/8\mathbb Z$}
As mentioned earlier, elliptic curves with torsion subgroup $\mathbb Z/8\mathbb Z$ have appeared in a wide variety of contexts in recent years.
Writing the curve in the form $y^2=x^3+Ax^2+Bx$ is a convenient way to seek for candidates for rational points of infinite order.
Basically, the $x$-coordinates should be either divisors of $B$ or rational squares times these divisors.
In this way, we describe eight rank-one subfamilies of \eqref{ec}, along with the existence of infinitely many elliptic curves of rank at least two. Then, from the subfamilies of rank at least one, we derive five curves of rank $5$.

\subsection{Existence of positive rank elliptic curves.}
Our computations show that there are many elliptic curves $E_a$ of positive rank where their torsion  subgroups are $\mathbb Z/8\mathbb Z$.
For instance, if $x_1=-a^3$ was to be the $x$-coordinate of a point $P$ on $E_a$, we would need to find a rational $b$ such that
$$E:\, b^2=a^4-5a^3-2a^2-20a+1.$$

There is a rational point $(0,1)$ on $E$, and hence $E$ is equivalent to an elliptic curve. Using standard methods of transformation, $E$ is isomorphic to
\begin{equation*}
E':\, q^2=p^3+7668p+361881
\end{equation*}
with the transformation
\begin{equation*}
a=\frac{15p+2q+1170}{12p-819}.
\end{equation*}
The curve $E'$ has rank $2$ with generators $(-38,128)$ and $(-24,405)$ and torsion subgroup $\mathbb Z/3\mathbb Z$, with finite torsion points
$(12, \pm 675)$. There are thus an infinite number of rational points on $E'$ and hence an infinite number of values of $a$ giving $E_a$ of rank at least one.

For example, $(-24,405)$ gives $a=-60/41$ and $b= \pm 11431/1681$.

In a similar way, one readily can achieve similar results by considering the points with $x$-coordinates such as
$ka^{\ell}$ for $(k,\ell)=(2,0)$, $(8,0)$, $(1,2)$, $(2,2)$, $(8,2)$, $(2,4)$, $(8,4)$, $(\pm 1,1)$, $(\pm 2,1)$, $(\pm 8,1)$, $(-16,1)$, $(\pm 16,2)$, $(-1,3)$, $(\pm2,3)$, $(\pm 8,3)$, $(\pm16,3)$.
\subsection{Subfamilies with rank $\geq1$.}
We used computer programs to hunt for points on \eqref{ec} which will give a rational point subject to a quadratic being a square. These can be parameterized in the usual way. The points and parameterizations are exhibited in Table \ref{T3}.
\begin{table}[h]
\begin{center}
\begin{tabular}{ccr}
\hline
Subfamily~Number & $x$ & Parametrization~for~$a$ \\ \hline
1 & 4 & $(k^2-8k+11)/(k^2-5)$ \\
2 & $8a-4$ & $(k^2+12)/(2k^2-8)$\\
3 & $a(a+1)^2$ & $-(2k-3)/(k^2-1)$\\
4 & $2a(a^2+1)$ & $-2k/(k^2-1)$ \\
5 & $a^2+4a-1$ & $(k^2-4k+5)/(k^2-1)$\\
6 & $-4a^3(a-2)$ & $-4(k-1)/(k^2+3)$\\
7 & $2(a^2+2a-1)$ & $-(k^2+1)/(2k-2)$\\
8 & $a^2(-a^2+4a+1)$ & $-(2k-4)/(k^2+1)$\\
\hline
\end{tabular}
\vspace{.2cm}
\caption{Rank-one subfamilies with torsion subgroup $\mathbb Z/8\mathbb Z$}\label{T3}
\end{center}
\end{table}

We give only the details of the first rank-one subfamily, and leave the details of the remainder to the reader.

Let us consider the number 4 to be the $x$-coordinate of a point $P$ on $E_a$. This leads us to seek a rational $b$ such that $5a^2+6a+5=b^2$. Using $(a,b)=(1,4)$, we achieve the parametric solution
\begin{equation*}
a=\frac{k^2-8k+11}{k^2-5}.
\end{equation*}
For these values of $a$, the corresponding curves can be written as
$$E_k:y^2=x^3+A(k)x^2+B(k)x,$$
where
\begin{align*}
A(k)&=-2(k^8-16k^7+76k^6-16k^5-1226k^4+5456k^3-11348k^2+11984k \\
    & \qquad \quad-5167),\\
B(k)&=(k^2-8k+11)^4(k^2-5)^4,
\end{align*}
once we clear the denominators.

There is no rational value of $k$ which makes $B(k)=0$, so the only problems come from repeated roots for the cubic.
We have $A^2(k)-4B(k)$ equals
\begin{equation*}
-4096(k-1)^2(k-2)^4(k-3)^2(k^2+2k-7)(k^2-10k+17)
\end{equation*}
so that we get a non-singular elliptic curve for rational $k\neq1,2,3$.

The elliptic curve $E_k$ contains the point
$$P=\left((k^2-5)^4, 16(k-2)(k^2-4k+5)(k^2-5)^4\right).$$
By Mazur's theorem \cite[Theorem 8.11]{Was}, one can easily check that $P$ is not a torsion point.
Alternatively, it can be shown using a specialization argument, since the specialization map is a monomorphism.
For $k=0$, the elliptic curve $E_k$ turns into
$$E_0:y^2=x^3+10334x^2+9150625x$$
with
$$P=(625, 100000).$$
The height of this point is $2.34275900093414$ showing that the point has infinite order. Hence, by the specialization theorem of Silverman \cite[Theorem 20.3]{Sil}, rank~$E_k(\mathbb Q)\geq 1$.

Now, we show that torsion subgroup of $E_k$, $\mathcal T$, is generically isomorphic to $\mathbb Z/8\mathbb Z$.

Clearly, the curve $E_k$ has 2-torsion point at $T_1=(0,0)$. There will be other points of order $2$ if $A^2(k)-4B(k)=\Box$, which reduces to
$$T^2=-(k^2+2k-7)(k^2-10k+17)=-(k^4-8k^3-10k^2+104k-119)$$
having rational solutions. Cremona's RATPOINT program shows the the quartic is not locally soluble at the primes $2$ and $3$, so there is
only ever one point of order $2$.

Since the original elliptic curve $E_a$ has points of order $4$ and $8$, and we are just making rational transformations $E_k$ also must
have points of these orders. The presence of only one order $2$ point means that the torsion subgroup of $E_k$ must be $\mathbb Z/8\mathbb Z$.

The other cases are investigated in a similar manner.

\subsection{Infinitely many curves of rank $\geq2$}
We now show that there exists infinitely many elliptic curves (with torsion subgroup $\mathbb Z/8\mathbb Z$)
of rank at least two, parameterized by rank-one subfamilies.
For this, we use two methods.
The first method rests on \cite{Lec1}, while the second one is based on method of Mestre.

We begin by giving details of the first method leading to an infinitude of curves of rank $\geq2$ in the Number-1 and -5 subfamilies in Table \ref{T3}.

Consider the equation $a_1=a_5$ i.e.
$$\frac{r^2-8r+11}{r^2-5}=\frac{s^2-4s+5}{s^2-1},$$
which we show has infinitely many rational solutions. Considered as a quadratic in $s$, the resulting discriminant has to be a rational square, so
$$R^2=r^4-12r^3+30r^2-4r-31$$
for some $R \in \mathbb{Q}$. But this is equivalent to the rank-one elliptic curve
$$E:y^2-12xy-8y=x^3-6x^2+124x-744$$
being generated by $(-18,-96)$.
Now, by the specialization theorem, it is sufficient to show the independence of the points
with $x$-coordinates $Q_1=4$ and $Q_2=a^4+4a-1$. For this, we take $(x_0,y_0)=(-366/25,-9632/125)$ on $E$, then
$r=28/15$ and $a=101/341$. Then the points of infinite order
\begin{align*}
P_1&=(4, 879360/116281), \\
P_2&=(31684/116281, 1907106240/13521270961),
\end{align*}
are independent on the elliptic curve
$$y^2=x^3-\frac{6170699848}{13521270961}x^2+\frac{1664966416}{13521270961}x,$$
since the corresponding regulator has value $29.1615800873524$.

Similar results can be obtained for other combinations from Table \ref{T3}.
The following pairs of the subfamilies also lead to infinitely many elliptic curves of
rank at least two: $(i,j)=(1,8)$, (2,5), (2,7), (3,4), (3,5), (3,8), (4,5), (4,6), (4,7), (4,8), (5,7), (6,8).

By using the fifth rank-one family exhibited in Table \ref{T3}, we can also get infinitely many curves of rank at least two. The family can be written as:
$$y^2=x^3+Ax^2+Bx,$$
with
\begin{align*}
A&=-2k^8+16k^7-48k^6+48k^5+116k^4-528k^3+912k^2-784k+274,\\
B&=(k^2-1)^4(k^2-4k+5)^4.
\end{align*}
Suppose we wish a point with $x$-coordinate $-(k^2-1)^2(k^2-4k+5)^2$. This leads to the quartic $-k^4+4k^3-16k+14$ having to be a square.
This quartic is equivalent to the rank-one curve $y^2+80y=x^3-18x^2+100x-1800$, being generated by $(15,-15)$. An argument as above shows the independence.

We can do the same for the associated curve
$$\bar E_a:y^2=x^3-2Ax^2+(A^2-4B)x$$
of $E_a$, in which $A=a^4-4a^3-2a^2-4a+1$, and $B=16a^4$. The curve $\bar E_a$ is isogenous to $E_a$ via  $(x,y)\mapsto(y^2/x^2,y(B^2-x^2)/x^2)$. This map, as a group homomorphism (i.e. isogeny), carries the rational points of $\bar E_a(\mathbb Q)$ into the rational points of $E_a(\mathbb Q)$. We note that, by the same procedure, a map $\bar E_a\rightarrow\bar{\bar{E}}_a\cong E_a$ is defined. This intimate relation between $E_a$ and $\bar E_a$ makes it natural, if one is studying $E_a$, to also study $\bar E_a$.

Now, let
$$x=(1-a^2)(a^2-6a+1)$$
be the $x$-coordinate of a point on the curve $\bar E_a$. We must have $b \in \mathbb{Q}$ such that $-a^2+4a+1=b^2$. This is equivalent to choosing
$$a=-2\frac{k-2}{k^2+1}.$$
For these values of $a$, the corresponding curves $\bar E_a$ can be written as
$${\bar E}_k:y^2=x^3+A(k)x^2+B(k)x,$$
where
\begin{align*}
A(k)&=-2k^8-16k^7+40k^6-176k^5+532k^4-752k^3+360k^2+176k+ 94, \\
B(k)&=(k^4+12k^3-18k^2-4k-7)(k^2-2k+5)^2(k+3)^4(k-1)^4,
\end{align*}
and the curve ${\bar E}_k$ has the point of infinite order
\begin{align*}
P_1&=((k-1)(k+3)(k^2-2k+5)(k^4+12k^3-18k^2-4k-7),\\
& 4(k-1)(k-2)(k+3)(k^2-2k+5)(k^2-4k-1)(k^4+12k^3-18k^2-4k-7)).
\end{align*}
so long as $(k^4+12k^3-18k^2-4k-7)(k+3)(k-1)(k-2)\neq0$.

Now, considering
$$(k+3)^4(k-1)^2(k^2-2k+5)$$
as the $x$-coordinate of a new point on $\bar E_k$, requires
$$K^2=17k^4-20k^3+2k^2-12k+29$$
for a $K \in \mathbb{Q}$. Since this latter equation has a solution $(k,K)=(1,4)$, and has nonzero discriminant,
it is equivalent to the rank-one elliptic curve
$$y^2+384y=x^3+44x^2-1088x-47872.$$
Now, a similar argument as given in the start of this section based on the specialization theorem shows the existence of infinitely many curves of rank at least two.
\begin{remark}
We say that the set of points $P_i=(x_i,y_i)$ on the curve $E$, for $i=1$, 2, $\dots$, $\ell$,
forms a length $\ell$ geometric progression if the sequence $x_1$, $x_2$, $\dots$, $x_{\ell}$
forms a geometric progression.
Obviously, the curve $E_a$, for any $a$, contains the points with $x$-coordinates
$4a^i$, $1\leq i\leq 3$ forming a length-three progression.
The subfamily $E_a$ with $a=(k^2-8k+11)/(k^2-5)$ has the points with
$x_i=4a^i$, $0\leq i\leq 4$ forming a length-five geometric progression.
The subfamily $E_a$ with $a=(k^2-2k+2)/(k^2+2)$ also possesses the same property.
Notice that this latter subfamily has infinitely many $k$'s for which $ E_a$ is of rank at least one.
In fact, imposing 4 to be the $x$-coordinate of a point implies that the quartic
$K^2=4k^4-8k^3+21k^2-16k+16$
must have infinitely many solutions. But it has a solution, $(k,K)=(0,4)$ for example,
hence it is equivalent to the cubic $y^2+xy=x^3-17x-23$ whose rank is $1$.
\end{remark}

\section{Ranks of $E_a$ with torsion subgroup $\mathbb Z/2\mathbb Z\times \mathbb Z/8\mathbb Z$}
In the previous section, we studied the possible ranks of $E_a$ where the torsion subgroup was $\mathbb Z/8\mathbb Z$. In this section, we provide a complete parametrization of those $a$ for which $E_a$ has torsion subgroup $\mathbb Z/2\mathbb Z\times \mathbb Z/8\mathbb Z$. We also derive twenty-six known curves, with this torsion subgroup, which have rank $3$.
\begin{theorem}\label{4.1}
The torsion subgroup of $E_a(\mathbb Q)$ equals $\mathbb Z/2\mathbb Z\times \mathbb Z/8\mathbb Z$ if and only if $$a=-\frac{r+1}{r(r-1)}\text{ for some }r\in\mathbb Q\setminus\{0,\pm1\}.$$
\end{theorem}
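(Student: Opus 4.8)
The plan is to reduce the torsion computation to a single square condition on $a$, and then to parametrize that condition as a conic. The key observation is that $E_a$ always carries a full copy of $\mathbb Z/8\mathbb Z$: the point $(0,0)$ of order $2$, the points $(4a^2,\pm 4a^2(a-1)^2)$ of order $4$, and the points $(4a,\pm 4a(a^2-1))$ of order $8$ are rational for every admissible $a$. Hence the torsion subgroup always contains $\mathbb Z/8\mathbb Z$. By Mazur's theorem the only groups in the allowed list that contain an element of order $8$ are $\mathbb Z/8\mathbb Z$ itself and $\mathbb Z/2\mathbb Z\times\mathbb Z/8\mathbb Z$, and these are distinguished by their $2$-torsion: the former has a unique point of order $2$, the latter has three. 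So I would first argue that the torsion equals $\mathbb Z/2\mathbb Z\times\mathbb Z/8\mathbb Z$ if and only if $E_a$ has full rational $2$-torsion, i.e. the cubic $u(u-u_1)(u-u_2)$ splits completely over $\mathbb Q$. From the explicit $u_1,u_2$ recorded earlier, this occurs exactly when $a^2-6a+1$ is a rational square.

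It then remains to show that $a^2-6a+1=\square$ is equivalent to $a=-\frac{r+1}{r(r-1)}$ with $r\in\mathbb Q\setminus\{0,\pm1\}$. For the ``if'' direction I would substitute the parametrization directly: writing $a=-\frac{r+1}{r(r-1)}$ and clearing denominators, the numerator of $a^2-6a+1$ becomes $r^4+4r^3+2r^2-4r+1=(r^2+2r-1)^2$, so that $a^2-6a+1=\bigl(\tfrac{r^2+2r-1}{r^2-r}\bigr)^2$, a rational square.

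For the ``only if'' direction I would invert the parametrization. Viewing $a=-\frac{r+1}{r(r-1)}$ as an equation in $r$ yields the quadratic $a\,r^2+(1-a)\,r+1=0$, whose discriminant is precisely $(1-a)^2-4a=a^2-6a+1$. Thus, given any rational $a\neq0$ with $a^2-6a+1=w^2$ for some $w\in\mathbb Q$, the value $r=\frac{(a-1)+w}{2a}$ is rational and satisfies the required relation; note that every admissible $a$ is indeed nonzero, since $\Delta$ vanishes at $a=0$. I would close by checking that the excluded parameters lose nothing: $r=0$ and $r=1$ make the expression $-\frac{r+1}{r(r-1)}$ undefined, while $r=-1$ gives $a=0$, which is already excluded because $E_0$ is singular.

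I expect the only genuinely delicate step to be the reduction via Mazur, specifically making explicit that an order-$8$ point together with full $2$-torsion forces the torsion to be exactly $\mathbb Z/2\mathbb Z\times\mathbb Z/8\mathbb Z$ and cannot be larger. Once that is in place, the equivalence with the square condition on $a^2-6a+1$ and the conic parametrization is a routine, if slightly fiddly, pair of algebraic identities.
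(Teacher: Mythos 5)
Your proposal is correct and follows essentially the same route as the paper: reduce via Mazur's theorem to the condition of full rational $2$-torsion, observe that this is equivalent to $a^2-6a+1$ being a rational square, and then parametrize that conic condition. The only difference is cosmetic --- the paper sweeps lines through $(0,1)$ on $a^2-6a+1=b^2$ and substitutes $k=2r-1$, whereas you verify the identity $r^4+4r^3+2r^2-4r+1=(r^2+2r-1)^2$ directly and invert via the discriminant of $ar^2+(1-a)r+1=0$; your version is, if anything, slightly more explicit about the inverse map and the excluded values of $r$.
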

\begin{proof}
Since $E_a$ has points of order $8$ at $u=4a$ and $u=4a^3$, to have $\mathbb Z/2\mathbb Z\times \mathbb Z/8\mathbb Z$ torsion only requires three points of order $2$. The points of order $8$ will still have order $8$, so $3$ rational points of order $2$ forces the torsion subgroup (over $\mathbb{Q}$)
to be $\mathbb Z/2\mathbb Z\times \mathbb Z/8\mathbb Z$, by Mazur's classification of possible torsion structures.

This occurs when
\begin{equation*}
(a^4-4a^3-2a^2-4a+1)^2-4(16a^4)=\Box=(a+1)^2(a-1)^4(a^2-6a+1).
\end{equation*}

The rational quadric $a^2-6a+1=b^2$ has the simple solution $(0,1)$, and the line $b=1+ka$ meets the curve again when
\begin{equation*}
a=\frac{2(k+3)}{1-k^2}
\end{equation*}
which is a parametrization of those $a$ having $3$ rational points of order $2$.

Substituting $k=2r-1$ gives the stated result.
\end{proof}

From this, we refer the reader to \cite{C-G} to see what is known concerning the ranks of curves with prescribed torsion subgroup $\mathbb Z/2\mathbb Z\times \mathbb Z/8\mathbb Z$.

We close this section by listing, in Table \ref{T4}, 26 out of 27 known rank-three curves which come from the family $E_a$, with $a$
defined as in Theorem \ref{4.1} (see also \cite{Duj}).

\begin{table}[h]
\begin{center}
\begin{tabular}{lll}
\hline
$r$ & Authors & Date  \\ \hline
${12}/{17}$ & Connell-Dujella &  2000 \\
${47}/{18}$ &  Dujella &  2001  \\
${133}/{86}$ & Rathbun &  2003  \\
${201}/{239}$ & Campbell-Goins & 2003 \\
${299}/{589}$, $247/160$, $281/138$, & Dujella  &  2006  \\
$281/133$ && \\
$439/17$, $569/159$ & Rathbun & 2006 \\
$923/230$ & Dujella-Rathbun & 2006 \\
$247/419$, $200/99$, $337/65$ & Flores-Jones-Rollick-Weigandt-Rathbun & 2007 \\
$1017/352$ & Dujella & 2008 \\
${999}/{76}$, ${412}/{697}$, ${349}/{230}$,   & Fisher & 2009 \\
$217/425$, $440/217$, $309/470$,   &  & \\
$496/319$, $585/391$, $219/313$,  & & \\
$336/191$ && \\
${257}/{287}$ &  Rathbun  &  2013 \\ \hline
\end{tabular}
\vspace{.2cm}
\caption{Curves with torsion subgroup $\mathbb Z/2\mathbb Z\times\mathbb Z/8\mathbb Z$ having rank $3$}\label{T4}
\end{center}
\end{table}

\section{Examples of curves with high rank}
The highest known rank of an elliptic curve over $\mathbb Q$ with torsion subgroup $\mathbb Z/8\mathbb Z$ is rank 6. The first example was found by
Elkies in 2006 and the second, recently in 2013, by Dujella, MacLeod and Peral. We refer to \cite{Duj} for the details of both curves.

The most common strategy for finding high rank elliptic curves over $\mathbb Q$ is the construction of families of elliptic
curves with  positive generic rank as high as possible, and then to search for adequate specialization with efficient
sieving tools. One popular tool is the Mestre-Nagao sum, see for example \cite{Mes, Nag}.

These sums are of the form
$$S(n,E)=\sum_{p\leq n,~p~\rm{prime}} \left(1-\frac{p-1}{\#E(\mathbb F_p)}\right) \log{p}.$$
For the subfamilies 1, 5 and 8 in Table \ref{T3}, we looked for those curves $E_a$ with $S(523,E_a)>10$ and
$S(1979, E_a)>14$, while for subfamily 4, we considered $S(523, E_a)>8$, $S(1979, E_a)>10$.
After this initial sieving, we calculated the rank of the remaining curves with {\tt{mwrank}} \cite{Cre} though we were not able to
always determine the rank exactly. The next table summarises the results found.
\begin{table}[h]
\begin{center}
\begin{tabular}{cl}
\hline
Subfamily~Number & $k$  \\  \hline
&\\
1 & $\displaystyle \frac{257}{134}, \frac{311}{129}$  \\
& \\
4 & $\displaystyle \frac{115}{28}, \frac{301}{396}, \frac{12}{233}^*$ \\
& \\
5 & $\displaystyle \frac{79}{50}^{**}$ \\
&\\
8 & $\displaystyle \frac{113}{129}^{**}$\\
& \\
\hline
\end{tabular}
\vspace{.2cm}
\caption{Rank-five curves with torsion subgroup $\mathbb Z/8\mathbb Z$}\label{T5}
\end{center}
\end{table}

For the curves with $k$ indicated by $k^*$ in the table, we have $4\leq{\rm rank}\leq5$, and, using the parity conjecture, we get (conditionally)
that the rank is equal to 5. For the family 4 with $k=389/858$, and $k=221/148$, we have respectively $1\leq{\rm rank}\leq7$, and $4\leq{\rm rank}\leq6$.
The $k$'s indicated by $k^{**}$ give rise to the same curve:
\begin{align*}
y^2+xy&=x^3-304241169811532712979315990x \\
&\quad +2065986446448965089594679105215890328100.
\end{align*}
\bibliographystyle{amsplain}

\end{document}